\documentclass[12pt]{amsart} 
\usepackage{amsmath, amssymb, amsthm}
\usepackage{hyperref} 
\usepackage{geometry} 
\usepackage{natbib}
\usepackage{color}
\usepackage{hyperref}
\newtheorem{thm}{Theorem}[section]

\newtheorem{lem}[thm]{Lemma}

\theoremstyle{definition}
\newtheorem{defn}{Definition}[section]
\newtheorem{example}[defn]{Example}
\newtheorem{rem}[defn]{Remark}

{\qed\bigskip}

\newcounter{alphabet}

\title{Area Theorems and Quasiconformal Extensions of Harmonic Mappings with a Pole}
\author[Z. Chen]{Zhijun Chen}
\address{School of Statistics,
	University of International Business and Economics, No.~10, Huixin
	Dongjie, Chaoyang District, Beijing 100029, China}
\email{}
\author[L.-M.~Wang]{Li-Mei Wang}
\address{School of Statistics,
	University of International Business and Economics, No.~10, Huixin
	Dongjie, Chaoyang District, Beijing 100029, China}
\email{wangmabel@163.com}
\subjclass[2020]{Primary 30C62, 31A05}
\keywords{ Harmonic mappings, Quasiconformal mappings, quasiconformal extension, area theorem.}
\thanks{The authors were supported by a grant of University of International Business and Economics (No.78210418).
}
\begin{document}
	\maketitle

	\begin{abstract}
		In this paper, we study the class $\Sigma_{H}^{k}(p)$ of sense-preserving univalent harmonic mappings in the unit disk $\mathbb{D}$ that possess a simple pole at $p\in[0,1)$ and admit a $k$-quasiconformal extension to the extended complex plane for $k\in[0,1)$. 
		In 2024, Bhowmik and Satpati established an area theorem and derived a sufficient condition for the $k$-quasiconformal extension of harmonic mappings belonging to  $\Sigma_{H}^{k}(p)$  without logarithmic terms.
		Motivated by their work, we investigate the corresponding problem when a logarithmic singularity is present. Our main contributions are two-fold: we first prove a generalized area theorem for all mappings in $\Sigma_{H}^{k}(p)$; we then obtain a sufficient condition for sense-preserving univalent harmonic mappings in $\mathbb{D}$ to admit explicit $k$-quasiconformal extensions. 
		These results extend the aforementioned work to the setting where logarithmic singularities are allowed.
	\end{abstract}
	
		%
		%
		%
	
	\section{Introduction and main results}
	
	Let $\mathbb{D} = \{z \in \mathbb{C} : |z| < 1\}$  and $\overline{\mathbb{D}}$ denote the open and closed 
	unit disk in the complex plane $\mathbb{C}$, and $\partial\mathbb{D}$ the boundary of $\mathbb{D}$.
	Denote the extended complex plane by $\hat{\mathbb{C}}=\mathbb{C}\cup\{\infty\}$. 
	Let $\mathbb{D}^{*}=\{z\in\hat{\mathbb{C}}:|z|>1\}$ be the exterior of the unit disk and $\overline{\mathbb{D}^{*}}$ the closure of $\mathbb{D}^{*}$. 
	
	A complex-valued function $f$ is said to be \textit{harmonic} in $\mathbb{D}$ if both its real and imaginary parts are harmonic. 
	As is well known, a harmonic mapping $f$ in the unit disk can be represented by $f=h+\bar{g}$,
	where both $h$ and $g$ are analytic in $\mathbb{D}$.
	The Jacobian of $f$ is defined by 
	\[
	J_f (z)= |h'(z)|^2 - |g'(z)|^2.
	\]
	The mapping $z\mapsto f(z)$ is locally univalent if $J_f (z)\not=0$ in $\mathbb{D}$. 
	The result of Lewy \cite{Lewy} confirmed that the converse is also true for harmonic mappings.
	Therefore,  $z\mapsto f(z)$ is locally univalent and sense-preserving if and only if its Jacobian $J_f (z)>0$ in $\mathbb{D}$. 
	The study of harmonic mappings is an active area in geometric function theory (see Duren \cite{duren}).

	In contrast to conformal mappings, harmonic mappings are not determined by their image domains. 
	Motivated by this fact, Hengartner and Schober in \cite{original} initiated the study of sense-preserving  univalent  harmonic mappings on $\mathbb{D}^{*}$ which fix $\infty$. 
	They proved that such mappings admit the representation
	$$
	f(z)=\alpha z+\beta \bar{z}+h\left(\frac1z\right)+\overline{g\left(\frac1z\right)}+A \log |z|
	$$
	where $0 \leq|\beta|<|\alpha|$ and  $h$ and $g$ are analytic in $\mathbb{D}$ and $A\in \mathbb{C}$.
	After an affine post-mapping to $f$, we may normalize $f$ as 
	\begin{align*}
		f(z)=z+h\left(\frac1z\right)+\overline{g\left(\frac1z\right)}+A\log|z|.
	\end{align*}
	Let $\Sigma_{H}'$ be the set of all the above normalized sense-preserving univalent harmonic mappings in $\mathbb{D}^{*}$.
	In the same paper, Hengartner et. al. proved that the family $\Sigma_{H}'$ is compact with respect to the topology of locally uniform convergence. In addition, if $f\in \Sigma_{H}'$, then $|A|\le 2$.
	
	Let $p\in [0,1)$. Through a M\"{o}bius transform $w=(1-pz)/(z-p)$,  
	we can change the problem from the family $\Sigma_{H}'$ in $\mathbb{D}^{*}$ to $\Sigma_{H}'(p)$ in $\mathbb{D}$,
	where  $\Sigma_{H}'(p)$  is the class of all sense-preserving univalent harmonic mappings $f$ in $\mathbb{D}$ that have a simple pole at the point $p$, and are re-normalized as:
	\begin{align}\label{001}
		f(z)=\frac{1}{z-p}+h(z)+\overline{g(z)}+A\log \left|\frac{1-pz}{z-p}\right|,\quad z\in \mathbb{D}\setminus\{p\},   
	\end{align}
	where 
	\begin{equation}\label{hg}
		h(z)=\sum_{n=1}^{\infty}a_{n}z^{n},\quad \text{and} \quad g(z)=\sum_{n=1}^{\infty}b_{n}z^{n},\ z\in \mathbb{D}. 
	\end{equation}

		The special subclass of $\Sigma_{H}'(p)$ with $A=0$ was investigated in \cite{original}, \cite{areatheorem} and \cite{qcextension}. 
	An important topic in the theory of harmonic mappings is the study of quasiconformal extensions. 
	We first introduce the definition of quasiconformal mappings and quasiconformal extensions as follows.
For detailed information on quasiconformal mapping, we refer to \cite{lehto}.
	\begin{defn}
		Let $\Omega \subset \widehat{\mathbb{C}}$ be a domain. A sense-preserving homeomorphism $F: \Omega \to \widehat{\mathbb{C}}$ is called \emph{$k$-quasiconformal} $($$k$-q.c. in short$)$ for $k \in [0, 1)$  if it has locally $L^2$-derivatives $($in the sense of distributions$)$ on $\Omega \setminus \{F^{-1}(\infty), \infty\}$ and satisfies
		\[
		\left| F_{\bar{z}} \right| \leq k \left| F_{z} \right| \quad \text{almost everywhere}.
		\]
		The quantity $\mu_F = F_{\bar{z}} / F_z$ is called the \emph{complex dilatation}$($or {\emph Beltrami coefficient$)$} of $F$.
	\end{defn}

	\begin{defn}
		Let $k \in [0,1)$. A univalent holomorphic  $($or meromorphic or harmonic$)$ function $f$ defined on a domain $\Omega \subset \widehat{\mathbb{C}}$ is called to \emph{admit a $k$-quasiconformal extension to $\widehat{\mathbb{C}}$} if there exists a $k$-quasiconformal mapping $F: \widehat{\mathbb{C}} \to \widehat{\mathbb{C}}$ such that $F|_\Omega = f$. 
        Furthermore, a holomorphic function $f: \Omega \to \mathbb{C}$ $(\Omega \subset \mathbb{C})$ is said to admit a $k$-quasiconformal extension to $\mathbb{C}$ if there exists a $k$-quasiconformal mapping $F: \mathbb{C} \to \mathbb{C}$ satisfying $F|_\Omega = f$.
	\end{defn}

	Let $\Sigma_{H}^{k}(p)$ consist of harmonic mappings in $\Sigma_{H}'(p)$ that admit $k$-quasiconformal extensions to $\hat{\mathbb{C}}$. 
	Mateljevi\'c studied the area theorem of harmonic mappings in $\Sigma_{H}^{k}(0)$ by using Dirichlet's principle.
    For the history of the classical Gronwall's area theorem and its generalizations, the reader is referred to 
    recent work \cite{areatheorem} due to Bhowmik and Satpati and the references therein.
	In the same paper, they provided the area theorem and coefficient estimates for harmonic mappings belonging to $\Sigma_{H}^{k}(p)$ in \cite{areatheorem}. 
	But they only considered the case which does not contain the logarithmic term. 
	Our first aim is to generalize their results and evaluate the coefficients for all harmonic mappings in $\Sigma_{H}^{k}(p)$ as follows.
	
	\begin{thm}\label{areatheorem}
		Let $0\le k <1,~0\le p<1$ and $f\in \Sigma_{H}^{k}(p)$ have expressions of the form \eqref{001}, then
		\begin{align}\label{coefficient}
			\sum_{n=1}^{\infty}n|a_{n}|^{2}\le k^{2}\Big[\frac{1}{(1-p^{2})^{2}}+2\operatorname{Re}\left(\sum_{n=1}^{\infty}np^{n-1}b_{n}\right)+\sum_{n=1}^{\infty}n|b_{n}|^{2}\Big]
		\end{align}
        and 
        \begin{equation}\label{eq:A}
        |A|< \frac{2k}{1-p^{2}}.
        \end{equation}
		Equality holds in inequality \eqref{coefficient} only for functions in $\Sigma_{H}^{k}(p)$ that are of the form 
		\begin{align}
			f(z)=\frac{1}{z-p}+\frac{cz}{1-pz}+cg(z)+\overline{g(z)}+A\log \left|\frac{1-pz}{z-p}\right|,\quad z\in \mathbb{D},
			\label{108}
		\end{align}
		where $g(z)$ is analytic in $\mathbb{D}$ and $c$ is a constant with $|c|=k$. Moreover, the $k$-quasiconformal extension of $f$ to $\overline{\mathbb{D}^*}$ is given by 
		\begin{align}
			\tilde{f}(z)=\frac{1}{z-p}+\frac{c}{\overline{z}-p}+cg\left(\frac{1}{\overline{z}}\right)+\overline{g}\left(\frac{1}{\overline{z}}\right),\quad z\in  \overline{\mathbb{D}^{*}}.
			\label{109}
		\end{align}
        
	\end{thm}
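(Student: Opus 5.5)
We follow the Dirichlet-principle/area-comparison argument used for the case $A=0$ in \cite{areatheorem}, and treat the logarithmic term separately. Let $F$ be a $k$-q.c.\ extension of $f$ to $\hat{\mathbb{C}}$. Because $F$ is a homeomorphism sending $p$ to $\infty$, the complement $E=F(\overline{\mathbb{D}^*})$ is a compact set bounded by the Jordan curve $f(\partial\mathbb{D})$. For $r$ slightly less than $1$, the area of the bounded region enclosed by $f(|z|=r)$ is given by Green's formula as $\frac{1}{2i}\oint_{|z|=r}\overline{f}\,df$. On $\overline{\mathbb{D}^*}$ we compute the same area as $\iint_{\mathbb{D}^*}J_F\,dxdy$. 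Since $|F_{\bar z}|\le k|F_z|$, we have $J_F=|F_z|^2-|F_{\bar z}|^2\ge (1-k^2)|F_z|^2\ge\frac{1-k^2}{k^2}|F_{\bar z}|^2$. The plan is to bound the area from below by a Dirichlet integral of $F_{\bar z}$ and then use the boundary values of $f$ on $\partial\mathbb{D}$ to evaluate $\iint F_{\bar z}$-type quantities through Stokes.

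\textbf{Step 1 (the $A$ estimate).} The term $A\log\left|\frac{1-pz}{z-p}\right|$ vanishes on $\partial\mathbb{D}$. We therefore compare the boundary values of $f$ with those of the harmonic function $\phi(z)=\frac{1}{z-p}+h+\bar g$. The function $\log\left|\frac{1-pz}{z-p}\right|$ is the Green's function of $\mathbb{D}$ with pole at $p$. Near $p$ the mapping $F$ behaves like $\frac1{z-p}+A\log\frac1{|z-p|}+\dots$, and $k$-quasiconformality at the pole forces a bound on $A$. Concretely, we compose with the inversion $w=1/F$ and the Möbius map $\zeta\mapsto(1-p\zeta)/(\zeta-p)$, which places the pole at $\infty$. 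We then write the Beltrami coefficient near $\infty$: $F_{\bar z}/F_z\to -\frac{A}{2}(1-p^2)\cdot(\text{unimodular})$ as $z\to p$, computed from $\partial_{\bar z}\log|\cdot|$ versus $\partial_z\frac{1}{z-p}$ after accounting for the factor $1-p^2$. Since $|\mu_F|\le k$ a.e., this gives $|A|(1-p^2)/2\le k$. Strict inequality is the delicate point. We expect to obtain it from the Hengartner–Schober argument that $|A|\le 2$ and that the relevant dilatation limit is attained in the interior, so that equality would force $|\mu|\equiv k$ in a way incompatible with univalence near the pole. If this fails, we fall back on the compactness of $\Sigma_H^k$ with a limiting argument.

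\textbf{Step 2 (coefficient inequality).} We write $F=\Phi+A\,G_p$ on $\mathbb{D}^*$, where $G_p$ is any extension of the Green term that is zero on $\partial\mathbb{D}$ (for example $G_p\equiv0$ outside). We then apply the $A=0$ argument of \cite{areatheorem}. Set
\[
\Delta = \iint_{\mathbb{D}^*}\left(|F_z|^2-\tfrac{1}{k^2}|F_{\bar z}|^2\right)\le 0,
\]
and express each term through boundary integrals using the Fourier series of $f|_{\partial\mathbb{D}}$. Here $\frac1{z-p}=\sum p^n\bar z^{n+1}$ on $|z|=1$ contributes the $\frac{1}{(1-p^2)^2}$ and $\sum np^{n-1}b_n$ cross terms, since the $\bar z^{n}$ coefficients of $\bar g+\frac1{z-p}$ are $\bar b_n+p^{n-1}$. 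By Dirichlet's principle, the $\bar z$-part of $F$ in $\mathbb{D}^*$ has Dirichlet energy at least that of its harmonic (antianalytic reflected) extension, namely $\sum n|p^{n-1}+\bar b_n|^2$, which expands to the bracket in \eqref{coefficient}. The $z$-part likewise has energy at least $\sum n|a_n|^2$. Combining these with $|F_{\bar z}|\le k|F_z|$ and the area identity gives \eqref{coefficient}; the $A$ term drops out because it vanishes on the boundary. \textbf{The main obstacle} is making this splitting rigorous, that is, checking that the minimizers decouple and that the logarithmic singularity does not spoil the boundary integrals as $r\to1$.

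\textbf{Step 3 (equality).} Equality forces every Dirichlet inequality to be sharp, so $F$ restricted to $\mathbb{D}^*$ is harmonic of the form (analytic in $1/\bar z$) $+$ (conjugate) with $F_{\bar z}=cF_z$ identically, where $|c|=k$. Reading off the coefficients, we get $a_n=c(\bar b_n+p^{n-1})$ up to conjugation conventions. This gives $h=\frac{cz}{1-pz}+cg$, which is \eqref{108}, and reflecting yields the extension \eqref{109}. Conversely, for $|c|=k$ the map \eqref{109} has constant dilatation of modulus $k$ and glues continuously with $f$ on $\partial\mathbb{D}$, which confirms that equality is attained.
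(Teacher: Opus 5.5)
\textbf{Steps 2 and 3 (coefficient inequality and equality case).} These follow the paper's route: Dirichlet's principle for the extension on the complementary disk, the omitted area, $|F_{\bar z}|\le k|F_z|$, and a constant-modulus quotient in the equality case. The ``decoupling'' you call the main obstacle is not needed.

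Write $a=\sum n|a_n|^2$, let $B$ be the bracket in \eqref{coefficient}, and let $P=\pi(B-a)$ be the omitted area from Lemma~\ref{lemma1}. The paper uses only total energies. The extension has energy $4\alpha-2P\le 2P\frac{1+k^2}{1-k^2}$ on the complementary disk, because $\alpha=\iint|F_z|^2\le P/(1-k^2)$. The harmonic extension of the same boundary values has energy $2\pi(B+a)$. Dirichlet's principle then gives $a\le k^2B$ at once. Equivalently, $\iint(|F_z|^2-|F_{\bar z}|^2)$ is fixed by the boundary values, so the Dirichlet inequality for the sum yields $\iint|F_{\bar z}|^2\ge\pi a$ by itself.

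Three bookkeeping points:
\begin{itemize}
\item In $\mathbb{D}^*$ the boundary coefficients $p^{n-1}+\bar b_n$ of $\bar z^n$ feed $F_z$, and the $a_n$ feed $F_{\bar z}$. This is the reverse of your labels, and with your labels the inequality comes out backwards.
\item Your $\Delta$ is $\ge0$, not $\le 0$.
\item ``The $A$ term drops out because it vanishes on $\partial\mathbb{D}$'' is exactly what Lemma~\ref{lemma1} has to prove. You must show that the logarithmic contributions to $-\frac{1}{2i}\oint_{|z|=r}\bar f\,df$ tend to $0$ as $r\to1^-$.
\end{itemize}

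\textbf{Step 1: the genuine gap.} The Beltrami coefficient does not tend to anything of modulus $\frac{|A|}{2}(1-p^2)$ at the pole. Since $f_z\sim-(z-p)^{-2}$ and $f_{\bar z}\sim-\frac{A}{2}(\bar z-p)^{-1}$, you get $|\mu_f(z)|\sim\frac{|A|}{2}|z-p|\to0$. Likewise $\mu\to0$ at $\infty$ after your change of variables. So $|\mu|\le k$ near the pole bounds nothing.

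The missing idea is global, and it is the paper's argument. The second dilatation $\omega=\overline{f_{\bar z}}/f_z$ is analytic in $\mathbb{D}$, since the singularity at $p$ is removable. It satisfies $|\omega|\le k$, $\omega(p)=0$ and $\omega'(p)=\bar A/2$. Let $\phi_p(z)=\frac{z-p}{1-pz}$. The Schwarz lemma applied to $\omega\circ\phi_p^{-1}$ gives $\frac{|A|}{2}(1-p^2)\le k$.

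\textbf{Strictness is false.} No argument can give strict inequality in \eqref{eq:A}. Take
\[
f(z)=\frac{1}{z-p}-k^{2}\,\overline{\Bigl(\frac{z}{1-pz}\Bigr)}+\frac{2k}{1-p^{2}}\log\left|\frac{1-pz}{z-p}\right|,
\]
so that $h\equiv0$, $g(z)=-k^2z/(1-pz)$ and $|A|=2k/(1-p^2)$.

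For $p=0$ this is $f(z)=\frac1z-k^2\bar z+2k\log\frac{1}{|z|}$. It maps the circle $|z|=1/r$ ($r>1$) onto the circle with centre $2k\log r$ and radius $r-k^2/r$. These circles are strictly nested because $1+\frac{k^2}{r^2}-\frac{2k}{r}=(1-\frac kr)^2>0$, so $f$ is univalent. Moreover $|\mu_f|=k|z|$, and $f=(1-k^2)\bar z$ on $\partial\mathbb{D}$. So $f$ glues with the conformal map $(1-k^2)/z$ on $\overline{\mathbb{D}^*}$ to give a $k$-q.c.\ homeomorphism of $\hat{\mathbb{C}}$.

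For general $p$, the example equals $\frac{1}{1-p^2}\bigl(f_0\circ\phi_p+p(1-k^2)\bigr)$, where $f_0$ is the map for $p=0$. Its extension to $\overline{\mathbb{D}^*}$ is $\frac{1-k^2}{z-p}$, and its second dilatation is $\omega=k\phi_p$.

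The paper's contradiction argument has an algebra slip. After substituting $\omega=\lambda\phi_p$ with $\lambda=\bar A(1-p^2)/2$, it drops the factor $\phi_p$ in front of $h'$. In fact the polar terms cancel identically, because $\frac{1}{z-p}+\frac{p}{1-pz}=\frac{1-p^2}{(z-p)(1-pz)}$. What remains is $g'=\lambda\phi_p\bigl(h'-\frac{A}{2(z-p)}-\frac{Ap}{2(1-pz)}\bigr)$, whose right-hand side is analytic in $\mathbb{D}$, so there is no contradiction.

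So only $|A|\le 2k/(1-p^2)$ holds, and it is sharp. Your fallbacks via Hengartner--Schober's $|A|\le 2$ or via compactness would at best recover this extremal, not strictness.
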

	
	\begin{rem} 
		The inequality \eqref{coefficient} is the same as in Theorem 1 of \cite{areatheorem}
		which means that the logarithmic term does not affect the coefficient estimate.
	\end{rem}


		Recently, sufficient conditions for harmonic mappings with a real pole to admit quasiconformal extensions were also obtained by Bhowmik and Satpati in \cite{qcextension} and Ma in \cite{Ma}. However, all their results do not cover mappings with logarithmic singularities yet. So, the second main result of this paper provides a sufficient condition for harmonic mappings in $\Sigma_{H}'(p)$ to admit $k$-quasiconformal extensions. Furthermore, the quasiconformal extension is also explicitly given  in the theorem as follows.
		\begin{thm}\label{extensiontheorem}
			Let $0\le k<1$ and $0\le p<1$. Let $f$ be a harmonic mapping in $\mathbb{D}$ of the form \eqref{001} that satisfies 
			\begin{equation}
				|h'(z)|+|g'(z)|\le \frac{k}{(1+p)^{2}}-\frac{p^{2}+1}{1-p^{2}}|A|,\quad z\in\mathbb{D},
				\label{106}
			\end{equation}
			then $f\in \Sigma_{H}^{k}(p)$. The quasiconformal extension of $f$ is given by
			\begin{align}
				F(z)=\begin{cases}
					f(z), &z\in \mathbb{D},\\
					\frac{1}{z-p}+h(\frac{1}{\overline{z}})+\overline{g}(\frac{1}{\overline{z}}),\quad &z\in \overline{\mathbb{D}^{*}}.
				\end{cases}
				\label{107}
			\end{align}
		\end{thm}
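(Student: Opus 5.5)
The plan is to show that the map $F$ in \eqref{107} is a continuous, $W^{1,2}_{\mathrm{loc}}$ map of $\widehat{\mathbb{C}}$ with $|F_{\bar z}|\le k|F_z|$ almost everywhere, hence quasiregular. A degree count at the pole will then show that $F$ is a homeomorphism. Since $F|_{\mathbb{D}}=f$, this gives at once that $f$ is univalent, sense-preserving and lies in $\Sigma_H^k(p)$.

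\emph{Continuity.} On $|z|=1$ we have $1/\bar z=z$ and $\log|(1-pz)/(z-p)|=0$. Reading $\overline{g}(1/\bar z)=\overline{g(1/z)}$, both branches of \eqref{107} agree on $\partial\mathbb{D}$. Moreover $F(z)\to\infty$ only as $z\to p$, and $F(\infty)=h(0)+\overline{g(0)}=0$. Hence $F$ is continuous on $\widehat{\mathbb{C}}$, smooth off $\partial\mathbb{D}\cup\{p,\infty\}$, and therefore ACL and locally $W^{1,2}$ away from $p$ and $\infty$.

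\emph{Dilatation on $\mathbb{D}^*$.} Here $h(1/\bar z)$ and $\overline{g(1/z)}$ are antianalytic, so
$$F_z=-\frac{1}{(z-p)^2},\qquad F_{\bar z}=-\frac{h'(1/\bar z)+\overline{g'(1/z)}}{\bar z^{2}}.$$
Since $|z-p|/|z|\le 1+p$ for $|z|\ge1$, we get
$$|\mu_F|\le (1+p)^2\bigl(|h'|+|g'|\bigr)\le k$$
by \eqref{106}. On this side the logarithmic correction in \eqref{106} is not even needed.

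\emph{Dilatation on $\mathbb{D}$.} Write $\ell(z)=\log|(1-pz)/(z-p)|$. Using
$$\frac{p}{1-pz}+\frac{1}{z-p}=\frac{1-p^2}{(1-pz)(z-p)},$$
one finds
$$f_z=-\frac{1}{(z-p)^2}+h'(z)-\frac{A(1-p^2)}{2(1-pz)(z-p)},\qquad f_{\bar z}=\overline{g'(z)}-\frac{A(1-p^2)}{2\,\overline{(1-pz)(z-p)}}.$$
Multiply both by $(z-p)^2$ and set $r=|z-p|^2<(1+p)^2$ and $a=\tfrac{|A|}{2}(1-p^2)$. Since $|z-p|<|1-pz|$ in $\mathbb{D}$, the $A$-terms are bounded by $a$. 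The desired inequality $|f_{\bar z}|\le k|f_z|$ therefore follows from
$$r\bigl(|g'|+k|h'|\bigr)+(1+k)a\le k.$$
This is implied by
$$(1+p)^2\bigl(|h'|+|g'|\bigr)\le k-(1-p^2)|A|,$$
and hence by \eqref{106}, because
$$(1+p)^2\,\frac{1+p^2}{1-p^2}\ge 1\cdot(1-p^2).$$
The same estimate shows $|(z-p)^2f_z|\ge 1-k>0$, so $J_f>0$ and $f$ is sense-preserving. The points $p$ and $\infty$ are removable for the distortion estimate, via the local coordinates $1/F$ near $p$ and $z\mapsto 1/z$ near $\infty$.

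\emph{Global injectivity.} This is the main obstacle, since local injectivity across $\partial\mathbb{D}$ is not obvious by hand. I would avoid a direct argument there. Instead, $F$ is a nonconstant $K$-quasiregular self-map of $\widehat{\mathbb{C}}$ with $K=(1+k)/(1-k)$, so by Reshetnyak's theorem it is open and discrete, and hence a branched covering of finite degree. Near $p$, $F$ behaves like $1/(z-p)$ plus a bounded term plus $A\,\ell(z)$, which is $O(\log)$. Thus $F^{-1}(\infty)=\{p\}$, and the local index at $p$ equals $1$, as one sees from the argument principle on a small circle around $p$. So $\deg F=1$, and $F$ is a homeomorphism of $\widehat{\mathbb{C}}$ with $|\mu_F|\le k$, i.e.\ a $k$-quasiconformal extension of $f$.
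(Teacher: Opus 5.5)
\textbf{Gap: the extension formula is misread, so your proof analyzes the wrong map.} Your overall strategy (quasiregularity plus a degree count) is sound. The problem is that you read $\overline{g}(1/\bar z)$ as $\overline{g(1/z)}$.

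On $|z|=1$ one has $1/\bar z=z$ but $1/z=\bar z$. So your exterior branch has boundary values $\frac{1}{z-p}+h(z)+\overline{g(\bar z)}$, while $f$ has boundary values $\frac{1}{z-p}+h(z)+\overline{g(z)}$. These differ unless $g$ has real coefficients. For example, take $p=0$, $A=0$, $h\equiv 0$, $g(z)=i\varepsilon z$ with $0<\varepsilon\le k$; this satisfies \eqref{106}, yet the two branches at $e^{it}$ differ by $2\varepsilon\sin t$. So your continuity claim is false for the map you actually define, and Reshetnyak's theorem and the degree count cannot be applied to it.

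The exterior dilatation estimate fails for the same reason. Your $F_{\bar z}$ involves $h'(1/\bar z)$ and $g'(1/z)$, that is, $h'$ and $g'$ at two \emph{different} (conjugate) points. But \eqref{106} only controls $|h'(w)|+|g'(w)|$ at a common point $w$. So the bound $(1+p)^2(|h'|+|g'|)\le k$ is not available.

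\textbf{The repair.} Read the term as $\overline{g(1/\bar z)}$. This is what the paper means: compare its $\tilde F(\zeta)=\frac{\zeta}{1-p\zeta}+h(\bar\zeta)+\overline{g(\bar\zeta)}$. This term is \emph{analytic} in $z$. With $w=1/\bar z$ and $|z|>1$ you then get
\[
F_z=-\frac{1}{(z-p)^2}-\frac{\overline{g'(w)}}{z^2},\qquad F_{\bar z}=-\frac{h'(w)}{\bar z^{2}} .
\]
Using $|z/(z-p)|\ge 1/(1+p)$, this gives
\[
|\mu_F|\le\frac{|h'(w)|}{(1+p)^{-2}-|g'(w)|}\le\frac{k(1+p)^{-2}-|g'(w)|}{(1+p)^{-2}-|g'(w)|}\le k ,
\]
which is exactly the paper's exterior estimate. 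With this reading the two branches do agree on $\partial\mathbb{D}$.

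You should also justify $W^{1,2}_{\mathrm{loc}}$ across $\partial\mathbb{D}$ rather than infer it from smoothness off the circle. It holds because \eqref{106} makes $h'$ and $g'$ bounded. Hence $F$ is Lipschitz on each side near $\partial\mathbb{D}$, and being continuous across the circle, it is locally Lipschitz there.

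\textbf{Comparison with the paper.} After these corrections your route is valid and genuinely different from the paper's.
\begin{itemize}
\item The paper proves injectivity on $\overline{\mathbb{D}}$ by hand, via the lower bound $|f(z_1)-f(z_2)|\ge\frac{1-k}{(1+p)^2}|z_1-z_2|$. It then gets injectivity on $\mathbb{D}^*$ from an appeal to the argument principle that is only sketched.
\item You get global injectivity in one stroke: Reshetnyak's theorem, plus the fact that $p$ is the only preimage of $\infty$ and has local index $1$.
\end{itemize}
Your route costs heavier machinery, but it is more rigorous on the exterior side. It also never reuses \eqref{106}. Consequently your sharper interior estimate, which combines the two logarithmic terms and uses $|z-p|<|1-pz|$, shows that the conclusion already holds with $\frac{1-p}{1+p}$ in place of $\frac{1+p^2}{1-p^2}$ in \eqref{106}.
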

		
		\begin{rem} 
			Note that if $A=0$, Theorem \ref{extensiontheorem} reduces to Theorem 4 in \cite{qcextension}.
		\end{rem}
		
		The next two sections are devoted to proving Theorems \ref{areatheorem} and \ref{extensiontheorem}.

		\section{Proof of Theorem \ref{areatheorem}}
		
		In order to prove Theorem \ref{areatheorem}, we introduce some auxiliary lemmas.
		The first shows the area of the omitted set of a mapping in $\Sigma_{H}'(p)$
		which is the same as Lemma 1 of \cite{areatheorem} and thus demonstrates that the area is irrelevant to the logarithmic term.
		
		\begin{lem}\label{lemma1}
			Let $f \in \Sigma_{H}'(p)$ be given in the form \eqref{001}, then
			\begin{align}\label{eq:area}
				\mathrm{Area}(\widehat{\mathbb{C}} \setminus f(\mathbb{D})) = &\pi \Big[ \frac{1}{(1 - p^{2})^{2}} + 2 \operatorname{Re} \left( \sum_{n=1}^{\infty} n p^{n-1} b_{n} \right) + \sum_{n=1}^{\infty} n (|b_n|^{2} - |a_n|^{2}) \Big].
			\end{align}
		\end{lem}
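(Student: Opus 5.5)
The plan is to compute the area of the omitted set $E=\widehat{\mathbb{C}}\setminus f(\mathbb{D})$ as the limit of areas enclosed by the image curves $\Gamma_r=f(\{|z|=r\})$ as $r\to1^-$ (with $r>p$). The idea is to split $f$ into the part without the logarithm and the logarithmic part, and to show that every contribution involving the logarithm disappears in the limit. Since $f$ is univalent and has a simple pole at $p$, $f(\{r<|z|<1\})\cup f(\{|z|\le r\})$ exhausts $f(\mathbb{D})$. Hence $E=\bigcap_r E_r$, where $E_r$ is the bounded component of $\widehat{\mathbb{C}}\setminus\Gamma_r$ that does not contain the image of $\{|z|<r\}$, and therefore $\mathrm{Area}(E)=\lim_{r\to1}\mathrm{Area}(E_r)$. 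Because the pole lies inside $|z|=r$, the curve $\Gamma_r$ winds around $E_r$ negatively. By Green's formula,
\[
\mathrm{Area}(E_r)=-\frac{1}{2i}\oint_{|z|=r}\overline{f}\,df .
\]

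Write $f=F+AL$, where $F(z)=\frac{1}{z-p}+h(z)+\overline{g(z)}$ and $L(z)=\log\left|\frac{1-pz}{z-p}\right|$, which is real-valued. The pure term $\oint \overline{F}\,dF$ is exactly the computation in Lemma 1 of \cite{areatheorem}, and I would reproduce it. On $|z|=r$ one has $\frac{1}{z-p}=\sum_{n\ge1}p^{n-1}z^{-n}$, and Parseval's identity over $\theta$ gives
\[
-\frac{1}{2i}\oint_{|z|=r}\overline{F}\,dF=\pi\Big[\sum_{n\ge1} n p^{2n-2}r^{-2n}+2\operatorname{Re}\sum_{n\ge1} n p^{n-1}b_n+\sum_{n\ge1} n(|b_n|^2-|a_n|^2)r^{2n}\Big],
\]
with the cross term between $z^{-n}$ and $\overline{b_nz^n}$ independent of $r$. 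Letting $r\to1$ yields the right-hand side of \eqref{eq:area}. Monotone convergence handles the $a_n$ and $b_n$ sums, since the left side converges to a finite area.

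Next come the logarithmic contributions. Since $L$ is real and single-valued, $\oint L\,dL=0$. For the cross terms, integration by parts on the closed curve gives $\oint L\,d\overline{F}=-\oint \overline{F}\,dL$. Together with the mixed terms, the total logarithmic contribution reduces to a constant multiple of $\oint \operatorname{Im}(\bar A F)\,\partial_\theta L\,d\theta$, with a coefficient of $|A|^2$ times zero for the pure logarithm part. Now expand $L$ on $|z|=r$:
\[
L=-\log r+\operatorname{Re}\sum_{n\ge1}\frac{p^n}{n}\big(z^{-n}-z^{n}\big).
\]
Its $\theta$-derivative therefore has Fourier coefficients of size $p^n(r^{-n}-r^n)$ at frequency $\pm n$. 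Pairing these with the Fourier coefficients of $F$, namely $p^{n-1}r^{-n}$, $a_nr^n$ and $b_nr^n$, bounds the whole contribution by a constant times
\[
\sum_{n} p^{2n-1}r^{-2n}(1-r^{2n})+\sum_n (|a_n|+|b_n|)\,p^n(1-r^{2n}).
\]

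The step I expect to be the main obstacle is justifying that this last quantity tends to $0$, because no a priori summability such as $\sum n|a_n|^2<\infty$ is available without the lemma itself. The way out is that $h$ and $g$ have radius of convergence at least $1$ and $p<1$, so $\sum (|a_n|+|b_n|)p^n<\infty$ (even with an extra factor $n$). Each term carries the factor $1-r^{2n}\to0$, so dominated convergence gives the limit $0$; the first sum is treated the same way. It remains to confirm that the orientation sign and the constant in the mixed term are consistent. This only affects whether the vanishing terms are added or subtracted, not the final formula. With this, $\mathrm{Area}(E)$ equals the right-hand side of \eqref{eq:area}, independently of $A$.
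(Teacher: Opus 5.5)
Your proposal is correct. Its skeleton matches the paper's: Green's formula for $-\frac{1}{2i}\oint_{|z|=r}\overline{f}\,df$ with the orientation you describe, the non-logarithmic part computed as in Lemma 1 of \cite{areatheorem}, and a proof that every term involving $A$ disappears as $r\to1^-$.

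The difference is in that last step. The paper keeps two complex integrals, $I_1$ (from $\overline{A}L\,dF$) and $I_2$ (from $A\overline{F}\,dL$). It evaluates each in closed form by Cauchy's formula, writing $L=\operatorname{Re}\tau$ with $\tau(z)=\log\frac{r(1-pz)}{r^2-pz}$ analytic on $|z|\le r$, and using $1/(\overline{z}-p)=z/(r^2-pz)$ on $|z|=r$. You instead use $\oint L\,dL=0$ and $\oint L\,dF=-\oint F\,dL$ to merge both into the single real integral $\oint \operatorname{Im}(\overline{A}F)\,\partial_\theta L\,d\theta$. You then bound it coefficientwise through the Fourier expansion of $L$ on the circle.

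Your route involves less bookkeeping, shows directly that the correction term is real, and justifies the limit properly. The paper's remark that $\tau$ converges uniformly to $0$, hence $I_1\to0$, tacitly needs your dominated-convergence argument. Indeed $c_nr^{2n}=\frac{p^n}{n}(1-r^{2n})$, so the paper's series $\sum_n na_nc_nr^{2n}$ is exactly your $\sum_n a_np^n(1-r^{2n})$. In exchange, the paper obtains explicit formulas for $I_1$ and $I_2$ at each fixed $r$.

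One small point concerns your sentence that monotone convergence handles the $a_n$ and $b_n$ sums. Applied to $\sum n|a_n|^2r^{2n}$ and $\sum n|b_n|^2r^{2n}$ separately, it says nothing about their difference if both diverge. That possibility is not excluded for a general $f\in\Sigma_H'(p)$. What your argument actually proves, like the paper's, is that $\lim_{r\to1^-}\sum_n n(|b_n|^2-|a_n|^2)r^{2n}$ exists and gives the area. This agrees with the series in \eqref{eq:area} as soon as $\sum n|b_n|^2<\infty$. That condition holds, for instance, in the $k$-quasiconformal setting where the lemma is applied.
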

		
		\begin{proof}
			We follow the strategy of proof of Lemma 1 in \cite{areatheorem} (see also  Theorem 3.8 in \cite{original} ).
			Let $P$ denote the area of the set omitted by $f$, that is, 
			$P = \mathrm{Area}(\widehat{\mathbb{C}} \setminus f(\mathbb{D}))$. 
			For the case $p=0$, we refer to [\citenum{original}, Thm.3.8] and find that
			\begin{align*}
				P=\pi\left[1+2\operatorname{Re}b_{1}+\sum_{n=1}^{\infty} n (|b_n|^{2} - |a_n|^{2})\right].
			\end{align*} 
			Otherwise, for each circle $C_r:~\{z : |z| = r\}$ with $0 < p < r < 1$, its image under $f$ is a simple closed curve. 
			With counterclockwise orientation, this curve bounds a domain of finite area, denoted by $P_r$. 
			First, we observe that
			$$
			\log\left|\frac{1-pz}{z-p}\right|=\log \left|\frac{r(1-pz)}{r^2-pz}\right|, \quad z\in C_r
			$$
			and 
			$$
			d\log\left|\frac{1-pz}{z-p}\right|
			=-\frac{1-p^2}{2}\left(\frac{dz}{(z-p)(1-pz)}+\frac{d\overline{z}}{(\overline{z}-p)(1-p\bar{z})}\right),
			\quad |z|<r.
			$$
			Using the expansion of $f$ and the above equations, we obtain by computations that
			\begin{align*}
				P_r&= -  \frac{1}{2i} \int_{C_r} \overline{f} \, df \\
				&= - \frac{1}{2i} \int_{C_r} \left( \frac{1}{\overline{z} - p} + \overline{h(z)} + g(z)+\overline{A}
				\log \left|\frac{1-pz}{z-p}\right|\right)\\
				& \quad  \left( - \frac{1}{(z - p)^2} dz + h'(z) dz + \overline{g'(z)} d\overline{z}+Ad\log \left|\frac{1-pz}{z-p}\right|\right) \\
				&= \frac{1}{2i} \left[ \int_{C_r} \frac{dz}{(\overline{z} - p)(z - p)^2} 
				+ \int_{C_r} \frac{gdz}{(z - p)^2}- \int_{C_r} \overline{h} h' dz\right. - \int_{C_r} \frac{\overline{g'} d\overline{z}}{\overline{z} - p} - \int_{C_r} g \overline{g'} d\overline{z}  \\
				&\quad-\overline{A} \int_{C_r} \log \left|\frac{r(1-pz)}{r^2-pz}\right|
				\left( - \frac{1}{(z - p)^2} dz + h' dz + \overline{g'} d\overline{z}\right) \\
				&\quad\left.-\frac{(1-p^2)A}{2}\int_{C_r}\left( \frac{1}{\overline{z} - p} + \overline{h} + g\right)\left(\frac{dz}{(z-p)(1-pz)}+\frac{d\overline{z}}{(\overline{z}-p)(1-p\bar{z})}\right)\right] \\
				&\quad \text{(rest of the terms are zero)} \\
				&=:  \pi \left[ \frac{r^2}{(r^2 - p^2)^2}  + \sum_{n=1}^{\infty} n p^{n-1} b_n - \sum_{n=1}^{\infty} n |a_n|^2 r^{2n} + \sum_{n=1}^{\infty} n p^{n-1} \overline{b}_n + \sum_{n=1}^{\infty} n |b_n|^2 r^{2n}\right] \\
				&\quad - \overline{A}I_{1}-\frac{(1-p^2)A}{2}I_{2}.
			\end{align*}
			Next we calculate the integrals $I_{1}$ and $I_{2}$. 
			Let $\tau(z)=\log(r(1-pz)/(r^2-pz))$ for simplicity.
			Observe that the function $z\mapsto \tau(z)$ is analytic in the disk surrounded by $C_r$ and thus 
			has the power series
			$$
			\tau(z)=\sum_{n=0}^{\infty}c_nz^n,\quad |z|<r.
			$$
			Thus after a fundamental computation, we deduce from Cauchy's integral formula that 
			\begin{align*}
				I_{1}&:=\frac{1}{2i}\int_{C_r}\log \left|\frac{r(1-pz)}{r^2-pz}\right|
				\left( - \frac{1}{(z - p)^2} dz + h' dz + \overline{g'} d\overline{z}\right)\\
				&=\frac{1}{4i}\int_{C_r}\left(\tau(z)+\tau(\bar{z})\right)
				\left( - \frac{1}{(z - p)^2} dz + h' dz + \overline{g'} d\overline{z}\right)\\
				&=\frac{1}{4i}\int_{C_r}\left(\tau(z)+\tau(\bar{z})\right)
				\frac{-dz}{(z - p)^2}+h'\tau(\bar{z})dz+\tau(z)\overline{g'} d\overline{z}\\
				&=\frac{\pi}{2}\left(-p\left(\tau(z)\right)'|_{z=p}-\sum_{n=1}^{\infty}(n+2)c_nr^{2n}
				+c_0+\sum_{n=1}^{\infty}na_nc_nr^{2n}-c_0-\sum_{n=1}^{\infty}nb_nc_nr^{2n}\right)\\
				&=\frac{\pi}{2}\left(\frac{p(1-r^2)}{(1-p^2)(r^2-p^2)}+\sum_{n=1}^{\infty}(na_n-nb_n-(n+2))c_nr^{2n}\right).
			\end{align*}
			Since $\tau(z)$ is uniformly convergent in $\mathbb{D}$ to $0$ as $r\to 1^{-}$,
			$\lim_{r\to1^-}I_1=0$.
			
			On the other hand, we find $1/(\overline{z} - p)=z/(r^2-pz) $ on $C_r$.
			Therefore by making use of the technique similar t0 $I_1$, we can 
			compute $I_2$ as follows 
			\begin{align*}
				I_{2}&:=\frac{1}{2i}\int_{C_r}\left( \frac{1}{\overline{z} - p} + \overline{h} + g\right)\left(\frac{dz}{(z-p)(1-pz)}+\frac{d\overline{z}}{(\overline{z}-p)(1-p\bar{z})}\right)\\
				&=\frac{1}{2i}\left(\int_{C_r}\left( \frac{z}{r^2 - pz} +  g\right)\frac{dz}{(z-p)(1-pz)}+
				\int_{C_r} \left(\frac{\overline{h}}{(\overline{z}-p)(1-p\bar{z})}+\frac{1}{(\overline{z}-p)^2(1-p\bar{z})}\right)d\overline{z}\right.\\
				&\quad \left. + \int_{C_r}\frac{\overline{h} dz}{(z-p)(1-pz)}
				+\int_{C_r}\frac{g d\overline{z}}{(\overline{z}-p)(1-p\bar{z})}\right)\\
				&=\pi\left(\frac{z}{(r^2-pz)(1-pz)}+\frac{g}{1-pz}
				-\overline{\frac{h}{1-pz}} -\left(\frac{1}{1-pz}\right)'\right)|_{z=p}\\
				&\quad\left. +\int_{C_r}\frac{\overline{hz} dz}{(r^2-p\bar{z})(1-pz)}
				-\overline{\int_{C_r}\frac{\overline{gz} dz}{(r^2-p\bar{z})(1-pz)}}\right)\\
				&=\pi\left(\frac{p}{(r^2-p^2)(1-p^2)}+\frac{g(p)}{1-p^2}-\frac{\overline{h(p)}}{1-p^2}-\frac{p}{(1-p^2)^2}
				+\frac{1}{p}\frac{\overline{hz} }{r^2-p\bar{z}}\big|_{z=pr^2}-
				\frac{1}{p}\overline{\frac{\overline{gz} }{r^2-p\bar{z}}}\big|_{z=pr^2}\right)\\
				&=\pi\left(\frac{p}{(r^2-p^2)(1-p^2)}+\frac{g(p)}{1-p^2}-\frac{\overline{h(p)}}{1-p^2}-\frac{p}{(1-p^2)^2}
				+\frac{\overline{h(pr^2)} }{1-p^2}-
				\frac{g(pr^2)}{1-p^2}\right).
		\end{align*}	
			Thus   $\lim_{r\to1^-}I_2=0$.
			
			By applying Green’s theorem and the limit behaviors of $I_1$ and $I_2$, 
			we conclude that 
			$$
			P = \displaystyle\lim_{r \to 1^{-}} P_r
			=\pi \left[ \frac{1}{(1 - p^2)^2}  + \sum_{n=1}^{\infty} n p^{n-1} b_n - \sum_{n=1}^{\infty} n |a_n|^2  + \sum_{n=1}^{\infty} n p^{n-1} \overline{b}_n + \sum_{n=1}^{\infty} n |b_n|^2 \right] 
			$$
			which is the claimed equation \eqref{eq:area}. 
		\end{proof}

		The next lemma is a part result of  Lemma 3.3 in \cite{Wang001}. 
		In order to state it, we first introduce the Gaussian hypergeometric function ${}_2F_1(a,b;c;x)$ which is defined by 
		\[
		{}_2F_1(a,b;c;x)
		= \sum_{n=0}^{\infty}
		\frac{(a)_n (b)_n}{(c)_n\, n!}\, x^n, \quad |x|<1,
		\]
		where $(a)_n$ denotes the Pochhammer symbol given by $(a)_0=1$ and
		\[
		(a)_n = a(a+1)\cdots(a+n-1), \quad n\ge1.
		\]
		For basic properties of hypergeometric functions, see \cite{AAR}.
		\begin{lem}
			\label{Wang}$($\cite[ Lem. 3.3]{Wang001}$)$
			Let $\alpha, ~\beta \ge 0$. Then
			\begin{align*}
				\frac{1}{2\pi}\int_{0}^{2\pi}
				\frac{d\theta}{|1 - z e^{i\theta}|^{2\beta}}
				= (1 - |z|^2)^{1-2\beta}
				\, {}_2F_1(1-\beta,\,1-\beta;\,1;\,|z|^2),
				\quad z \in \mathbb{D}.
			\end{align*}

		\end{lem}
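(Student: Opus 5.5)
The plan is to compute the left-hand side as a power series in $|z|^2$ via Parseval's identity, and then convert the result into the stated form using Euler's transformation for ${}_2F_1$.

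\emph{Reduction and expansion.} Fix $z\in\mathbb{D}$ and write $z=re^{i\varphi}$ with $0\le r<1$. The substitution $\theta\mapsto\theta-\varphi$ preserves the integral over a full period, so we may assume $z=r\in[0,1)$. The case $\beta=0$ is trivial: both sides equal $1$, because ${}_2F_1(1,1;1;x)=(1-x)^{-1}$. For $\beta>0$, write $|1-re^{i\theta}|^{-2\beta}=|(1-re^{i\theta})^{-\beta}|^2$, using the principal branch, which is well defined since $\operatorname{Re}(1-re^{i\theta})>0$. By the binomial series,
\[
(1-re^{i\theta})^{-\beta}=\sum_{n=0}^{\infty}\frac{(\beta)_n}{n!}\,r^n e^{in\theta}.
\]
This series converges absolutely and uniformly in $\theta$ for fixed $r<1$, since $(\beta)_n/n!=O(n^{\beta-1})$.

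\emph{Parseval.} Applying Parseval's identity on $[0,2\pi]$ gives
\[
\frac{1}{2\pi}\int_0^{2\pi}\frac{d\theta}{|1-re^{i\theta}|^{2\beta}}=\sum_{n=0}^{\infty}\frac{(\beta)_n^2}{(n!)^2}\,r^{2n}={}_2F_1(\beta,\beta;1;r^2),
\]
where the last equality uses $(1)_n=n!$ in the definition of ${}_2F_1$.

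\emph{Euler's transformation.} Next apply
\[
{}_2F_1(a,b;c;x)=(1-x)^{c-a-b}\,{}_2F_1(c-a,c-b;c;x),\qquad |x|<1,
\]
with $a=b=\beta$, $c=1$ and $x=r^2=|z|^2$. This yields $(1-|z|^2)^{1-2\beta}\,{}_2F_1(1-\beta,1-\beta;1;|z|^2)$, which is exactly the claimed formula.

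\emph{Main obstacle.} The only nontrivial ingredient is Euler's transformation. I would simply cite it from \cite{AAR}. If a self-contained argument were wanted, one can check that both sides satisfy the same hypergeometric differential equation with parameters $(a,b;c)$, are analytic at $x=0$, and take the value $1$ there. For $c=1$ the analytic solution at $x=0$ is unique up to a constant multiple, since the other exponent also equals $0$ and the second solution has a logarithmic singularity. Hence the two sides agree.

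The remaining care points are routine. We must make sure $|z|<1$ is used, because for $\beta\ge 1/2$ both sides blow up as $|z|\to1$. We must also confirm that the interchange of sum and integral is legitimate, which follows from the uniform convergence noted above.
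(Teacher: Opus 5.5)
Your proof is correct. After rotating to $z=r\in[0,1)$, you expand $(1-re^{i\theta})^{-\beta}$ binomially and apply Parseval. This gives ${}_2F_1(\beta,\beta;1;r^2)$, and Euler's transformation with $a=b=\beta$, $c=1$ turns it into the stated right-hand side. Your fallback ODE argument is also sound: for $c=1$ the recurrence $(n+1)^2a_{n+1}=(n+a)(n+b)a_n$ fixes a power-series solution once its constant term is fixed.

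The paper gives no proof of this lemma; it only quotes it from Lemma 3.3 of Wang's paper. So there is no internal argument to compare with, and yours is the natural self-contained route.

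Two remarks on what your version buys.

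First, the paper uses the lemma only once, with $\beta=2$ and $|z|=pr$ in the proof of Theorem \ref{areatheorem}. There your intermediate identity already suffices, since ${}_2F_1(2,2;1;x)=\sum_{n\ge0}(n+1)^2x^n=(1+x)/(1-x)^3$. This gives directly $\int_0^{2\pi}|1-pre^{i\theta}|^{-4}\,d\theta=2\pi(1+p^2r^2)/(1-p^2r^2)^3$, so Euler's transformation is needed only to reach the lemma's particular normalization.

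Second, you can avoid citing Euler's transformation altogether with a M\"obius substitution. Put $e^{i\theta}=(e^{is}+r)/(1+re^{is})$. Then $d\theta=(1-r^2)|1+re^{is}|^{-2}\,ds$ and $1-re^{i\theta}=(1-r^2)/(1+re^{is})$, hence
\[
\frac{1}{2\pi}\int_0^{2\pi}\frac{d\theta}{|1-re^{i\theta}|^{2\beta}}=(1-r^2)^{1-2\beta}\,\frac{1}{2\pi}\int_0^{2\pi}\frac{ds}{|1+re^{is}|^{2(1-\beta)}}.
\]
After $s\mapsto s+\pi$, your Parseval step with $\beta$ replaced by $1-\beta$ evaluates the last integral as ${}_2F_1(1-\beta,1-\beta;1;r^2)$. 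This proves the lemma directly, and incidentally proves this case of Euler's identity.

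Finally, nothing in your argument needs $\beta\ge 0$; the stray $\alpha$ in the statement plays no role. For every real $\beta$ the binomial coefficients satisfy $|(\beta)_n/n!|=O(n^{\beta-1})$ or the series terminates, so the identity holds for all real $\beta$.
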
 
		Let $Q(z)=u(x, y)+i v(x, y)$ be a complex valued $C^1$-function defined in $\mathbb{D}$, 
		where $z=x+i y$.
		The \emph{Dirichlet's integral} (or \emph{energy integral}) of $Q$ is defined as
		$$
		D[Q]=\iint_{\mathrm{D}}|\nabla Q|^2 d x d y=\iint_{\mathrm{D}}\left(|\nabla u|^2+|\nabla v|^2\right) d x d y,
		$$
		where $\nabla u=u_x+i u_y$ and $\nabla v=v_x+i v_y$. Dirichlet's principle basically states that, if we can fit a harmonic mapping to a given $C^1$-function on the boundary of $\mathbb{D}$, then the energy integral will be minimum for the harmonic mapping inside the disk $\mathbb{D}$.
		
		\begin{lem}\label{dirichlet}$($\cite{dirichlet}, Dirichlet's principle$)$
			Let $R$ and $Q$ be $C^1$-functions in $\overline{\mathbb{D}}$ and $Q$ harmonic in $\mathbb{D}$ such that $Q = R$ on $\partial \mathbb{D}$. 
			If $D[R]<\infty$, then
			\[
			D[Q] \leq D[R].
			\]
			Equality holds if and only if $Q \equiv R$ in $\mathbb{D}$.
		\end{lem}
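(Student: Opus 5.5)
The plan is the classical orthogonality argument, applied separately to real and imaginary parts. Write $Q=u+iv$ and $R=U+iV$, and set $W=R-Q=\phi+i\psi$, so $\phi=U-u$ and $\psi=V-v$. Then $W$ is $C^1$ on $\overline{\mathbb{D}}$ and $W=0$ on $\partial\mathbb{D}$. Expanding $|\nabla U|^2=|\nabla u+\nabla\phi|^2$, and similarly for $V$, gives
\begin{equation*}
D[R]=D[Q]+D[W]+2\iint_{\mathbb{D}}\left(\nabla u\cdot\nabla\phi+\nabla v\cdot\nabla\psi\right)dx\,dy .
\end{equation*}
Here $\nabla a\cdot\nabla b=a_xb_x+a_yb_y$. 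All three integrals are finite, because the gradients of $Q$ and $W$ are continuous, hence bounded, on the closed disk. So the inequality $D[Q]\le D[R]$ follows once the cross term is shown to vanish, since $D[W]\ge 0$.

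To kill the cross term, I use Green's first identity. However, $Q$ is only known to be harmonic in the open disk, so I apply the identity on $\mathbb{D}_r=\{|z|<r\}$ with $r<1$. On $\mathbb{D}_r$ both $u$ and $\phi$ are smooth enough, and $\Delta u=0$, so
\begin{equation*}
\iint_{\mathbb{D}_r}\nabla u\cdot\nabla\phi\,dx\,dy=\int_{|z|=r}\phi\,\frac{\partial u}{\partial n}\,ds-\iint_{\mathbb{D}_r}\phi\,\Delta u\,dx\,dy=\int_{|z|=r}\phi\,\frac{\partial u}{\partial n}\,ds .
\end{equation*}
The same identity holds for $v$ and $\psi$.

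The main technical point is the limit $r\to1^-$. On the left, the integrand is bounded on $\mathbb{D}$, so dominated convergence gives convergence to the integral over $\mathbb{D}$. On the right, $|\partial u/\partial n|\le\sup_{\overline{\mathbb{D}}}|\nabla u|<\infty$ by the $C^1$ hypothesis on $Q$. Moreover $\phi$ is uniformly continuous on $\overline{\mathbb{D}}$ and vanishes on $\partial\mathbb{D}$, so $\max_{|z|=r}|\phi|\to0$. Hence the boundary integral is at most $2\pi r\cdot\max_{|z|=r}|\phi|\cdot\sup|\nabla u|\to0$. The cross term therefore vanishes, and we obtain $D[R]=D[Q]+D[W]$.

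For the equality case, $D[R]=D[Q]$ forces $D[W]=0$. Then $\nabla\phi=\nabla\psi=0$ on $\mathbb{D}$, which is connected, so $W$ is constant. By continuity and $W|_{\partial\mathbb{D}}=0$ that constant is $0$, i.e. $Q\equiv R$. Conversely, if $Q\equiv R$ equality is trivial.
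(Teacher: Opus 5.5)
Your proof is correct and complete for the lemma as stated. The paper gives no proof of its own; it only cites the classical Dirichlet principle, and your argument is the standard proof of that result: you kill the cross term $\iint_{\mathbb{D}}(\nabla u\cdot\nabla\phi+\nabla v\cdot\nabla\psi)\,dx\,dy$ by Green's identity on $\mathbb{D}_r$ and let $r\to1^-$, giving $D[R]=D[Q]+D[R-Q]$.

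Your boundary estimate genuinely uses $\sup_{\mathbb{D}}|\nabla u|<\infty$, i.e.\ the $C^1(\overline{\mathbb{D}})$ hypothesis on $Q$. Under that hypothesis the assumption $D[R]<\infty$ is automatic. For the same reason, your argument would not directly cover the weaker-regularity setting in which the paper later applies the lemma, where $R$ is merely quasiconformal and $Q$ is only known to be continuous on $\overline{\mathbb{D}}$.
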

		Now we are ready to prove Theorem \ref{areatheorem}.
		

		\begin{proof}[Proof of Theorem \ref{areatheorem}]
			Since $f \in \Sigma_H^k(p)$, it follows that $f$ is harmonic in $\mathbb{D}$ and extends to a homeomorphism of $\widehat{\mathbb{C}}$ onto itself which is $k$-q.c. in $\mathbb{D}^* $. 
			Denote $l(z)=\sum_{n=1}^{\infty}\overline{b_{n}}z^{n}=\overline{g(\bar{z})}$ for simplicity.
			Let $\phi(\zeta) = f(1/\zeta)$, $\zeta \in \mathbb{\hat{C}}$, so that it can  be written as
			\[
			\phi(\zeta) = \frac{\zeta}{1 - p\zeta} + h\left(\frac{1}{\zeta}\right) + l\left(\frac{1}{\overline{\zeta}}\right)+A\log\left|\frac{\zeta-p}{1-p\zeta}\right|, \quad \zeta \in \mathbb{D}^*.
			\]
			Motivated by the expression of $\phi$, we define another function $F$ in $\overline{\mathbb{D}}$ by
			\begin{equation}
				F(\zeta) = \frac{\zeta}{1 - p\zeta} + h(\overline{\zeta}) + l(\zeta), \quad \zeta \in \overline{\mathbb{D}}.
				\label{102}
			\end{equation}
            The main idea of the following proof is from \cite{areatheorem}.
            For the completeness, we give the detail.
			It is easy to verify that $F$ is continuous in $\overline{\mathbb{D}}$, harmonic on $\mathbb{D}$ (since $F_{\zeta\overline{\zeta}} = 0$), and that $F \equiv \phi$ on $\partial \mathbb{D}$. 
			We wish to apply Lemma \ref{dirichlet} to $F$ and $\phi$. Therefore, we first need to show that $D[\phi]$ is finite in $\mathbb{D}$. Since $f \in \Sigma_H^k(p)$, the function $\phi$ is harmonic in $\mathbb{D}^*$ and $k$-q.c. in $\mathbb{D}$, and hence it has locally $L^2$-derivatives in $\mathbb{D}$. 
			Writing $\phi(\zeta) = u(\xi,\eta) + i v(\xi,\eta)$, where $\zeta = \xi + i\eta$, we compute
			\[
			\phi_\zeta = \frac{(u_\xi + v_\eta) + i(v_\xi - u_\eta)}{2}, 
			\qquad
			\phi_{\overline{\zeta}} = \frac{(u_\xi - v_\eta) + i(v_\xi + u_\eta)}{2}.
			\]
			Hence the Dirichlet integral of $\phi$ is 
			\begin{equation}\label{eq:D}
				D[\phi] = \iint_{\mathbb{D}} |\nabla \phi|^2 \, d\xi d\eta 
				= 2 \iint_{\mathbb{D}} \left( |\phi_\zeta|^2 + |\phi_{\overline{\zeta}}|^2 \right) d\xi d\eta 
				= 4\alpha - 2P,
			\end{equation}
			where
			\[
			\alpha = \iint_{\mathbb{D}} |\phi_\zeta|^2 \, d\xi d\eta,
			\qquad
			P = \iint_{\mathbb{D}} \left( |\phi_\zeta|^2 - |\phi_{\overline{\zeta}}|^2 \right) d\xi d\eta.
			\]
			Since $\phi$ is $k$-q.c. (being the composition of $f$ with a conformal map) in $\mathbb{D}$,
			we have $|\phi_{\overline{\zeta}}| \leq k |\phi_\zeta|$ in $\mathbb{D}$
			which further implies $P \geq (1 - k^2)\alpha.$
			Thus we infer from the equation \eqref{eq:D} that
			\begin{equation}
				D[\phi] \leq 2P \left( \frac{1 + k^2}{1 - k^2} \right).
				\label{100}
			\end{equation}
			Now as $f$ is $k$-q.c., $f(\partial \mathbb{D})$ has area zero.
			We derive from $\phi(\zeta)=f(1/\zeta)$ that
			\begin{equation*}
				P=\iint_{\mathbb{D}}J_{\phi}(\zeta)d\xi d\eta =\operatorname{Area}(\phi(\mathbb{D}))=\operatorname{Area}(f(\mathbb{D}^{*})).
			\end{equation*}
			Thus it follows from Lemma \ref{lemma1} and \eqref{100} that
			\begin{equation}\label{eq:D-esti}
				D[\phi]\le  2\pi  \frac{1 + k^2}{1 - k^2}  \Big[ \frac{1}{(1 - p^{2})^{2}} + 2 \operatorname{Re} \left( \sum_{n=1}^{\infty} n p^{n-1} b_{n} \right) + \sum_{n=1}^{\infty} n (|b_n|^{2} - |a_n|^{2}) \Big]
			\end{equation}
			which is finite in $\mathbb{D}$.
			Since $F$ and $\phi$ satisfy the conditions of Lemma \ref{dirichlet}, we have 
			\begin{equation}
				D[F]\le D[\phi].
				\label{101}
			\end{equation} 
			
			We proceed to compute the Dirichlet integral $D[F]$ of the harmonic mapping $F$.
			Since $F$ has the expansion \eqref{102}, we have
			\begin{align}\label{eq:dirivatives}
				F_{\zeta}=\frac{1}{(1-p\zeta)^{2}}+\sum_{n=1}^{\infty}n\overline{b_{n}}\zeta^{n-1}\quad 
				\text{and} \quad 
				F_{\bar{\zeta}}=\sum_{n=1}^{\infty}na_{n}\bar{\zeta}^{n-1}.  
			\end{align}
			Letting $\beta$ be 2 in Lemma \ref{Wang}, we obtain the value of integral:
			\begin{align*}
				\iint_{\mathbb{D}}\frac{1}{|1-p\zeta|^{4}}d\sigma&=\int_{0}^{1}rdr\int_{0}^{2\pi}\frac{d\theta}{|1-pre^{i\theta}|^{4}}\\
				&=2\pi \int_{0}^{1}{}_{2}F_{1}(-1,-1;1;(pr)^{2})(1-(pr)^{2})^{-3}rdr\\
				&=2\pi \int_{0}^{1}\frac{1+p^{2}r^{2}}{(1-p^{2}r^{2})^{3}}rdr\\
				&=\frac{\pi}{(1-p^{2})^{2}}.
			\end{align*}
			Then by virtue of the forms in \eqref{eq:dirivatives} and the above integral,
			it is easy to find that
			\begin{align}
				\iint_{\mathbb{D}}|F_{\zeta}|^{2}d\sigma =& \pi\Big[\frac{1}{(1-p^{2})^{2}}+2\operatorname{Re}\Big(\sum_{n=1}^{\infty}np^{n-1}b_{n}\Big)+\sum_{n=1}^{\infty}n|b_{n}|^{2}\Big]
				\label{103}
			\end{align}
			and
			\begin{align}
				\iint_{\mathbb{D}}|F_{\overline{\zeta}}|^{2}d\sigma = \pi\sum_{n=1}^{\infty}n|a_{n}|^{2}.
				\label{104}
			\end{align}
			From \eqref{103} and \eqref{104}, we have
			\begin{align*}
				D[F]=2\iint_{\mathbb{D}}(|F_{\zeta}|^{2}+|F_{\overline{\zeta}}|^{2})d\sigma
				=2\pi\Big[\frac{1}{(1-p^{2})^{2}}+2\operatorname{Re}\Big(\sum_{n=1}^{\infty}np^{n-1}b_{n}\Big)+\sum_{n=1}^{\infty}n(|b_{n}|^{2}+|a_{n}|^{2})\Big].
			\end{align*}
			Then using the former value and the inequalities \eqref{eq:D-esti} and \eqref{101},
			we arrive at the required inequality 
			of this theorem.
			
			Next we consider the equality case. If equality holds in \eqref{coefficient}, then equality must hold in  both \eqref{100} and \eqref{101}. By Lemma \ref{dirichlet} equality in \eqref{101} implies that $F\equiv \phi$ in $\mathbb{D}$. Equality in \eqref{100} yields that $|\phi_{\overline{\zeta}}|=k|\phi_{\zeta}|$ in $\mathbb{D}$ and hence $|F_{\overline{\zeta}}|=k|F_{\zeta}|$. Thus by making use of the expression of $F$ in \eqref{102}, we find that
			\begin{align*}
				\left|h'(\overline{\zeta})\right|
				= k \left| \frac{1}{(1 - p\zeta)^2} + l'(\zeta) \right|,
				\quad \zeta \in \mathbb{D},
			\end{align*}
			which can be written as $|G(\zeta)| = k$, where
			\[
			G(\zeta) = \frac{\overline{h'(\overline{\zeta})}}{\dfrac{1}{(1 - p\zeta)^2} + l'(\zeta)}.
			\]
			
			We now show that $G$ is analytic in $\mathbb{D}$. From the above relation, if $\zeta_0$ is a zero of finite order of the denominator of $G$, then it is also a zero of the numerator of the same order. Hence $G$ is analytic in $\mathbb{D}$ with constant modulus, which implies that $G(\zeta) = \overline{c}$, where $c$ is a constant with $|c| = k$. Therefore, we have
			\[
			\overline{h'(\overline{\zeta})} = \frac{\overline{c}}{(1 - p\zeta)^2} + \overline{c}\,l'(\zeta).
			\]
			Since $h(0) = l(0) = 0$, integrating both sides of the above equation along any curve in $\mathbb{D}$ joining $0$ to $\zeta$, we have
			\[
			h(\overline{\zeta}) = \frac{c\,\overline{\zeta}}{1 - p\overline{\zeta}} + c\,\overline{l(\zeta)}.
			\]
			Inserting it into the expression \eqref{102} of $F(\zeta)$ above, we obtain
			\begin{align*}
				F(\zeta)=\frac{\zeta}{1-p\zeta}+l(\zeta)+\frac{c\,\overline{\zeta}}{1 - p\overline{\zeta}}
				+ c\overline{l(\zeta)}.
			\end{align*}
			At last, we can define $\hat{f}$ as
			\begin{align*}
				\hat{f}(z)=
				\begin{cases}
					f(z),\quad z\in\mathbb{D},\\
					\tilde{f}(z),\quad z\in\overline{\mathbb{D}^{*}},
				\end{cases}
			\end{align*}
			where $f(z)$ and $\tilde{f}(z)=\phi(1/z)=F(1/z)$ are given in \eqref{108} and \eqref{109}.
              
              We finally show the proof of  inequality \eqref{eq:A}.
              A simple calculate yields for $z\in\mathbb{D}$ that 
    \begin{align}\label{eq:w}
        \omega(z):=\frac{\overline{f_{\overline{z}}}}{f_{z}}=\frac{g'(z)-\frac{\overline{A}}{2(z-p)}-\frac{\overline{A}p}{2(1-pz)}}{-\frac{1}{(z-p)^{2}}+h'(z)-\frac{A}{2(z-p)}-\frac{Ap}{2(1-pz)}}=\frac{\overline{A}(z-p)}{2}+O((z-p)^{2}),
    \end{align}
    which is analytic in $\mathbb{D}$ and satisfy $|w(z)|\leq k$ for $z\in\mathbb{D}$ since $f\in \Sigma_{H}^{k}(p)$.
    Then we choose M\"{o}bius transformation $\phi_{p}$ to translate $p$ to the origin, and define the composition $F(z)=\omega(\phi_{p}^{-1}(z))$. Thus, the estimation of $|A|$ can be given by using the Schwarz Lemma of $F$
    \begin{align*}
        k\ge |F'(0)|=|\omega'(p)\cdot (\phi_{p}^{-1})'(0)|=\frac{|A|(1-p^{2})}{2}.
    \end{align*}
    Now we will prove by contradiction that the equality cannot hold. This equality implies that 
    $F(z)=\omega(\phi_{p}^{-1}(z))=ke^{i\theta}z$, for some $\theta\in[0,2\pi)$ and then we obtain
    \begin{align*}
        \omega(z)=ke^{i\theta}\frac{z-p}{1-pz}\quad \text{and}\quad ke^{i\theta}=\frac{\overline{A}(1-p^{2})}{2},
    \end{align*}
    by observing the first term of $w$ in \eqref{eq:w}.
    Substituting $\omega(z)$ into the expression \eqref{eq:w}, we derive the following equation
    \begin{align*}
        &g'(z)-\frac{\overline{A}(1-p^{2})}{2}h'(z)\\
        &=\frac{\overline{A}(1+p(1-p^{2})-(1+p-p^{2})z)}{2(z-p)(1-pz)}+\frac{Ap-|A|^{2}(1-p^{2})}{2(1-pz)}-\frac{|A|^{2}p(1-p^{2})(z-p)}{4(1-pz)^{2}}.
    \end{align*}
    Since the left-hand side is analytic in $\mathbb{D}$, the first term on the right-hand side of the former equation must satisfy the following conditions
    \begin{align*}
        1+p(1-p^{2})-(1+p-p^{2})z\Big|_{z=p}=0,\ \text{i.e.} \ p=1
    \end{align*}
    This contradicts the assumption $p\in [0,1)$.

			The proof is complete.

		\end{proof}

		\section{Proof of Theorem \ref{extensiontheorem}}

		In this section, we show the proof of Theorem \ref{extensiontheorem}.
		
		\begin{proof}[Proof of Theorem \ref{extensiontheorem}]
			If $k=0$, then $A=0$ and $f=1/(z-p)+c$ in $\hat{\mathbb{C}}$ is conformal, where $c$ is constant.
			Thus, $f$ has a trivial $0$-q.c. extension in $\hat{\mathbb{C}}$. 
			For $0<k<1$, we now verify that $f$ admits a $k$-q.c. extension to $\hat{\mathbb{C}}$.
			
			First, we show that $f$ is univalent on $\mathbb{D}$. 
			For $z_1,z_2\in\mathbb{D}$ with $z_{1} \neq z_{2}$, it follows from the assumption \eqref{106} that
			\begin{align}\label{110}
				|h(z_1)-h(z_2)+\overline{g(z_1)-g(z_2)}|&=\left|\int_{[z_{2},z_{1}]}h'(z)dz+\overline{g'(z)dz}\right|\\
				&\le\int_{[z_{2},z_{1}]}(|h'(z)|+|g'(z)|)|dz|\nonumber\\
				&\le\Big(\frac{k}{(1+p)^2}-|A|\Big)|z_1-z_2|.\nonumber
			\end{align}
			Thus $f(z)-1/(z-p)+A\log |(1-pz)/(z-p)|$ can be continuously extended to $\overline{ \mathbb{D}}$. The above inequality \eqref{110} still holds in $\overline{ \mathbb{D}}$, which implies that $f$ has a continuous extension to $\overline{ \mathbb{D}}$. For $z_{1}, z_{2}\in \overline{ \mathbb{D}}\setminus\{p\}$, without loss of generality, we suppose $|(1-pz_1)/(z_{1}-p)|\ge |(1-pz_1)/(z_{2}-p)|$. Then a computation yields
			\begin{equation}\label{eq:crossratio}
				1\le \left|\frac{1-pz_1}{z_{1}-p}\frac{z_{2}-p}{1-pz_2}\right|
				\le 1+(1-p^2)\frac{|z_{2}-z_{1}|}{|(z_{1}-p)(1-pz_2)|}
			\end{equation}
			which further implies 
			\begin{align*}
				&\left|\frac{1}{z_1-p}-\frac{1}{z_2-p}-A\log\frac{|z_{1}-p|}{|z_{2}-p|}+A\log\frac{|1-pz_{1}|}{|1-pz_{2}|}\right|\\
				\ge& \frac{|z_{2}-z_{1}|}{|z_{1}-p|\cdot|z_{2}-p|}-|A|\log\left( 1+(1-p^2)\frac{|z_{2}-z_{1}|}{|(z_{1}-p)(1-pz_2)|}\right)\\
				\ge&   \frac{|z_{2}-z_{1}|}{|z_{1}-p|\cdot|z_{2}-p|}-\frac{|A|(1-p^2)|z_{2}-z_{1}|}{|(z_{1}-p)(1-pz_2)|} \\
				\ge& \frac{|z_{2}-z_{1}|}{|z_{1}-p|}\Big(\frac{1}{|z_{2}-p|}-\frac{|A| (1-p^2)}{|1-pz_2|)}\Big)\\
				\ge&|z_{2}-z_{1}|\left(\frac{1}{(1+p)^{2}}-|A|\right),
			\end{align*}
			since $|A|<1/(1+p)^{2}$ by assumption.
			Hence, we infer from \eqref{110} and the above inequality that
			\begin{align}
				|f(z_1)-f(z_2)|
				\ge \frac{1-k}{(1+p)^2}|z_1-z_2|>0,
				\label{111}
			\end{align}
			which implies the univalence of $f$ in $\overline{ \mathbb{D}}$.
			
			Furthermore, we get from \eqref{111} that $f(\partial \mathbb{D})$ is a Jordan curve. 
			Moreover, since the inequalities \eqref{101} and \eqref{eq:crossratio} 
			are valid for $z_{1}, z_{2}\in\partial\mathbb{D}$ with $z_{1} \neq z_{2}$, 
			we have   
			\begin{align*}
				&|f(z_{1})-f(z_{2})|\\
				\le& |h(z_1)-h(z_2)+\overline{g(z_1)-g(z_2)}|+\left|\frac{1}{z_1-p}-\frac{1}{z_2-p}-A\log\frac{|z_{1}-p|}{|z_{2}-p|}+A\log\frac{|1-pz_{1}|}{|1-pz_{2}|}\right|\\
				\le& \Big(\frac{k}{(1+p)^2}-|A|\Big)|z_1-z_2|+\frac{|z_{2}-z_{1}|}{|z_{1}-p|}\left(\frac{1}{|z_{2}-p|}+\frac{|A|(1-p^{2})}{|1-pz_{2}|}\right)\\
				\le&  \Big(\frac{k}{(1+p)^2}-|A|\Big)|z_1-z_2|+\frac{|z_{2}-z_{1}|}{1-p}\left(\frac{1}{1-p}+|A|(1+p)\right)\\
				\le& |z_{1}-z_{2}|\left(\frac{k}{(1+p)^{2}}+\frac{1}{(1-p)^{2}}+\frac{2|A|p}{1-p}\right).
			\end{align*}
			Thus $f(\partial\mathbb{D})$ is a rectifiable Jordan curve of length at most  $2\pi[k/(1+p)^{2}+1/(1-p)^{2}+2p|A|/(1-p)]$.

			Then, we need to show that $f$ is sense-preserving in $\mathbb{D}$ and $|\mu_{f}|\le k$ in $\mathbb{D}\setminus\{p\}$. We derive from the expression of $f$ in \eqref{001} that 
			\begin{align*}
				f_{z}=-\frac{1}{(z-p)^{2}}+h'(z)-\frac{A}{2(z-p)}-\frac{Ap}{2(1-pz)}\text{ and }
				f_{\overline{z}}=\overline{g'(z)}-\frac{A}{2(\overline{z}-p)}-\frac{Ap}{2(1-p\overline{z})}.
			\end{align*}
			Then a computation yields for $z\in \mathbb{D}\setminus\{p\}$ that
			\begin{align*}
				|f_{z}|-|f_{\overline{z}}|&\ge \frac{1}{|z-p|}\left(\frac{1}{|z-p|}-|A|\right)-\frac{p|A|}{|1-pz|}-|h'(z)|-|g'(z)|\\
				&\ge\frac{1-k}{(1+p)^{2}}-|A|\left(\frac{1}{1+p}+\frac{p}{1-p}-\frac{1+p^{2}}{1-p^{2}}\right)\\
                &=\frac{1-k}{(1+p)^{2}}\\
                &> 0,
			\end{align*}
            as $1/|z-p|-|A|\ge 1/(1+p)-|A|$ by assumption.
			Thus $J_{f}(z)=|f_{z}|^{2}-|f_{\overline{z}}|^{2}>0$ holds for $z\in \mathbb{D}\setminus\{p\}$ and $J_{f}(p)=\infty$.
			So $f$ is sense-preserving in $\mathbb{D}$.
			Since sense-preserving harmonic mappings are necessarily open maps (see [\citenum{duren}, p.10]), it also follows that $f$ is a homeomorphism in $\overline{ \mathbb{D}}$.
			Moreover, in view of the inequality \eqref{106}, we find out for $z\in \mathbb{D}\setminus\{p\}$, 
			\begin{align*}
				|g'(z)|+k|h'(z)|&\le |g'(z)|+|h'(z)|\\
				&\le \frac{k}{(1+p)^{2}}-\frac{|A|(1+k)(1+p^{2})}{2(1-p^{2})}\\
				&\le \frac{k}{|z-p|^2}-\frac{(1+k)|A|}{2(1+p)}-\frac{(1+k)p|A|}{2(1-p)}\\
				&\le \frac{1}{|z-p|}\left(\frac{k}{|z-p|}-\frac{(1+k)|A|}{2}\right)-\frac{|A|p(k+1)}{2(1-p)}
			\end{align*}
            as $k/|z-p|-(1+k)|A|/2>0$ by assumption.
			Hence, the complex dilatation satisfies 
			\begin{align*}
				|\mu_{f}(z)|&=\frac{|f_{\overline{z}}(z)|}{|f_{z}(z)|}=\frac{\left|\overline{g'(z)}+\frac{A}{2(\overline{z}-p)}+\frac{Ap}{2(1-p\overline{z})}\right|}{\left|-\frac{1}{(z-p)^{2}}+h'(z)+\frac{A}{2(z-p)}+\frac{Ap}{2(1-pz)}\right|}\\
				&\le\frac{|g'(z)|+\frac{|A|}{2|z-p|}+\frac{|A|p}{2(1-p)}}{\frac{1}{|z-p|}\left(\frac{1}{|z-p|}-\frac{|A|}{2}\right)-\frac{|A|p}{2(1-p)}-|h'(z)|}\le k.
			\end{align*}
			Therefore, $f$ is $k$-q.c. in $\mathbb{D}$.
			
			It is easy to see that the extension $F$ of $f$ in \eqref{107} coincides with $f$ on $\partial\mathbb{D}$, and hence, $F$ is continuous in $\hat{\mathbb{C}}$. 
			Finally, we prove that $F$ is sense-preserving in $\mathbb{D}^{*}$. Denoting $F(1/\zeta)$ as $\tilde{F}(\zeta)$, then from \eqref{107} we have 
			\begin{equation*}
				\tilde{F}(\zeta)=\frac{\zeta}{1-p\zeta}+h(\overline{\zeta})+\overline{g(\overline{\zeta})},\quad \zeta\in \mathbb{D}.
			\end{equation*}
			Using \eqref{106} again we have
			\begin{align*}
				&|\tilde{F}_{\zeta}|=\left|\frac{1}{(1-p\zeta)^{2}}+\overline{g'(\overline{\zeta})}\right|\ge \frac{1}{(1+p)^{2}}-|g'(\overline{\zeta})|,\quad \text{and}\\
				&|\tilde{F}_{\overline{\zeta}}|=\left|h'(\overline{\zeta})\right|\le \frac{k}{(1+p)^{2}}-|g'(\overline{\zeta})|,
			\end{align*}
			which gives $J_{F}(1/\zeta)|\zeta|^{-4}=J_{\tilde{F}}(\zeta)=|\tilde{F}_{\zeta}|^{2}-|\tilde{F}_{\overline{\zeta}}|^{2}>0$.
			
			Then we proceed to show that $F$ is a sense-preserving homeomorphism of $\widehat{\mathbb{C}}$.
			Since $J_{F} = |F_{z}|^{2} - |F_{\bar{z}}|^{2} > 0$
			wherever it is finite, $F$ is sense-preserving on $\widehat{\mathbb{C}}$.
			Moreover, $F$ agrees with $f$ on $\mathbb{D}$, and by construction it is continuous on $\widehat{\mathbb{C}}$. Since $f$ is univalent in $\mathbb{D}$ and $f(\partial \mathbb{D})$ is a Jordan curve, it follows from the argument principle for sense-preserving harmonic mappings that $F$ is injective in $\mathbb{D}^{*}$ and maps $\mathbb{D}^{*}$ onto the complementary component of $f(\mathbb{D})$.
			Hence, $F : \widehat{\mathbb{C}} \to \widehat{\mathbb{C}}$ is a sense-preserving bijective continuous mapping. Since $\widehat{\mathbb{C}}$ is both compact and $T_{2}$, it follows that $F$ is a homeomorphism (see [\citenum{munkres}, Thm.26.6]).
			At last, we show that $|\mu_{f}|\le k$ in $\mathbb{D}^{*}$. For $\zeta=1/z\in \mathbb{D}$, we get 
			\begin{equation*}
				|\mu_{f}(z)|=|\mu_{\tilde{F}}(\zeta)|=\frac{|\tilde{F}_{\overline{\zeta}}(\zeta)|}{|\tilde{F}_{\zeta}(\zeta)|}\le \frac{\frac{k}{(1+p)^{2}}-|g'(\overline{\zeta})|}{\frac{1}{(1+p)^{2}}-|g'(\overline{\zeta})|}<k.
			\end{equation*}
			The proof is complete.
		\end{proof}

        By using this theorem, we can construct some examples of functions in $\Sigma_{H}^{k}(p)$.
        
        \begin{example}  For $p\in[0,1)$, $k\in[0,1)$ and $A\in \mathbb{C}$ with $|A|<2k/(1-p^2)$, define
            \begin{align*}
                f_{\theta}(z)=\frac{1}{z-p}-c\cdot\operatorname{Re}\left(\frac{e^{i\theta}z}{1-pe^{i\theta}z}\right)+A\log\left|\frac{1-pz}{z-p}\right|,\quad z\in \mathbb{D},\ \theta\in [0,2\pi),
            \end{align*}
            where 
            \begin{align*}
                c=\left(\frac{1-p}{1+p}\right)^{2}k-\frac{(1+p^2)(1-p)}{1+p}|A|>0
            \end{align*}
            by assumption.
            A simple calculation shows that this function satisfies the condition of Theorem \ref{extensiontheorem}:
            \begin{align*}
                &h(z)=g(z)=-\frac{ce^{i\theta}z}{2(1-pe^{i\theta}z)},\\
                &|h'(z)|+|g'(z)|=\frac{c}{|1-pe^{i\theta}z|^{2}}\le\frac{c}{(1-p)^{2}}
                =\frac{k}{(1+p)^{2}}-\frac{1+p^{2}}{1-p^{2}}|A|.
            \end{align*}
           Thus $f_{\theta}(z)$ admits a $k-$q.c. extension to $\widehat{\mathbb{C}}$ and the extension can be expressed as
            \begin{align*}
                F_{\theta}(z)=\begin{cases}
                    f_{\theta}(z), &z\in\mathbb{D},\\
                    \dfrac{1}{z-p}-c\operatorname{Re}\dfrac{1}{ze^{i\theta}-p}, &z\in \overline{\mathbb{D}^{*}}.
                \end{cases}
            \end{align*}
        \end{example}

		\bibliographystyle{plain} 
		\bibliography{main}

	\end{document}